\newtheorem{theorem}{Theorem}
\theoremstyle{plain}
\newtheorem{lemma}{Lemma}
\newtheorem{remark}{Remark}
\begin{document}
\title[Interior-inverse Sturm-Liouville problem]{Mochizuki-Trooshin type
theorem for Sturm-Liouville problem on time scales}
\author{\.{I}. Adalar}
\curraddr{Zara Veysel Dursun Colleges of Applied Sciences, Sivas Cumhuriyet
University Zara/Sivas, Turkey}
\email{iadalar@cumhuriyet.edu.tr}
\author{A. S. Ozkan}
\curraddr{Department of Mathematics, Faculty of Science, Sivas Cumhuriyet
University 58140 Sivas, Turkey}
\subjclass[2000]{ 31B20, 39A12, 34B24}
\keywords{Inverse problem; time scale, Mochizuki-Trooshin theorem}

\begin{abstract}
In this paper, we consider an interior inverse Sturm-Liouville problem on
time scale $\mathbb{T=}[0,a_{1}]\cup \lbrack a_{2},l]$ and give a
Mochizuki-Trooshin type theorem.
\end{abstract}

\maketitle

\section{\textbf{Introduction }}

Time scale theory was introduced by Hilger in order to unify continuous and
discrete analysis \cite{Hilger}. This approach has applied quickly to
various area in mathematics. Sturm-Liouville theory on time scales was
studied first by Erbe and Hilger \cite{Erbe} in 1993. Some important results
on the properties of eigenvalues and eigenfunctions of a Sturm-Liouville
problem on time scales were given in various publications (see e.g. \cite%
{Agarwal}- \cite{Amster2}, \cite{Davidson}-\cite{Davidson3}, \cite{Erbe2}, 
\cite{Guseinov}, \cite{Guseinov2}, \cite{Hilger}, \cite{Simon}, \cite%
{Huseinov}-\cite{Sun} and the references therein).

Inverse spectral problems consist in recovering the coefficients of an
operator from their spectral characteristics. The first results on inverse
theory of classical Sturm-Liouville operator were given by Ambarzumyan and
Borg \cite{Ambartsumian}, \cite{Borg}. Inverse Sturm-Liouville problems
which appear in mathematical physics, mechanics, electronics, geophysics and
other branches of natural sciences have been studied for about ninety years
(see \cite{Atkinson}, \cite{chadan} and \cite{G}).

Specially, the inverse problem for interior spectral data of the
differential operator consists in reconstruction of this operator from the
given eigenvalues and some information on eigenfunctions at an internal
point. This kind of problems for the Sturm Liouville operator were studied
firstly by Mochizuki and Trooshin \cite{troos}. Similar results to Mochizuki
and Trooshin have been studied in various paper until today \cite{ada}-\cite%
{troos2}.

Although the literature for inverse Sturm--Liouville problems on a
continuous interval is vast, there is only a few studies about this subject
on time scales \cite{ozkan}, \cite{ozz} and \cite{yur}. Such problems is
useful in many applied problems, for example in string theory, in dynamics
of population, in spatial networks problems etc.

In this paper, we consider\ an interior inverse Sturm--Liouville problem on
a time scale and give a Mochizuki and Trooshin-type theorem. We hope that
our results will contribute to the development of inverse spectral theory on
time scales and to obtain stronger results in some applied sciences.

For basic concepts of the time scale theory we refer to the textbooks \cite%
{Bohner} and \cite{Bohner2}.

In this paper we give an Mochizuki-Trooshin type theorem in \cite{troos} on
a time scale. Throughout this paper we assume that $\mathbb{T=[}0,a_{1}]\cup
\lbrack a_{2},l]$ is a bounded time scale for $a_{1}<a_{2}.$ Let us consider
the boundary value problem. We consider following boundary value problem $L$%
\ on $\mathbb{T=[}0,a_{1}]\cup \lbrack a_{2},l]$ 
\begin{eqnarray}
&&\text{\ }\left. \ell y:=-y^{\Delta \Delta }(t)+q(t)y^{\sigma }(t)=\lambda
y^{\sigma }(t)\text{, \ \ }t\in \mathbb{T}^{k^{2}}\right. \medskip \\
&&\text{ }\left. U(y):=y^{\Delta }(0)-hy(0)=0\right. \medskip \\
&&\text{ }\left. V(y):=y^{\Delta }(l)+Hy(l)=0\right. \medskip
\end{eqnarray}%
where $q(t)$ is real valued continuous function on $\mathbb{T}$, $h,H\in 
\mathbb{R}
$ and $\lambda $ is the spectral parameter.

Together with $L$, we consider a boundary value problem $\widetilde{L}=L(%
\widetilde{q}(t),h,H)$ of the same form but with different coefficients $%
\widetilde{q}(t).$ We assume that if a certain symbol $s$ denotes an object
related to $L$ , then $\widetilde{s}$ will denote an analogous object
related to $\widetilde{L}.$

A function $y,$ defined on $\mathbb{T},$ is called a solution of equation
(1) if $y\in C_{rd}^{2}(\mathbb{T)}$ and $y$ satisfies (1) for all $t\in 
\mathbb{T}$. The values of the $\lambda $ parameter%
\c{}
for which (1)-(3) has nonzero solutions are called eigenvalues, and the
corresponding nontrivial solutions are called eigenfunctions \cite{Agarwal}.
It is proven in \cite{Agarwal} that the problem (1)-(3) has countable many
eigenvalues which are real, simple and bounded below, and can be ordered as $%
-\infty <\lambda _{1}<\lambda _{2}<...<\lambda _{n}<...$.

\section{\textbf{Main Result}}

Prior to calculations,we need some preliminaries.

Let $\varphi (t,\lambda )$ be the solution of (1) under the initial
conditions 
\begin{equation}
\varphi (0,\lambda )=1\text{, }\varphi ^{\Delta }(0,\lambda )=h
\end{equation}%
The zeros of the function $\Delta (\lambda )=\varphi ^{\Delta }(l,\lambda
)+H\varphi (l,\lambda )$ coincide with the eigenvalues of the problem
(1)-(3). It is proven in \cite{Oz} that the functions $\varphi (t,\lambda )$%
, $\varphi ^{\Delta }(t,\lambda )$ and so $\Delta (\lambda )$ are entire on $%
\lambda .$

It is clear that $\varphi (t,\lambda )$ satisfies the following integral
equation on $\left( 0,a_{1}\right) $%
\begin{equation}
\varphi (t,\lambda )=\cos \sqrt{\lambda }t+\frac{h}{\sqrt{\lambda }}\sin 
\sqrt{\lambda }t+\frac{1}{\sqrt{\lambda }}\int\limits_{0}^{t}\sin \sqrt{%
\lambda }(t-\xi )q(\xi )\varphi (\xi )d\xi .
\end{equation}%
On the other hands, $\varphi ^{\Delta }(t,\lambda )$ is continuous at $a_{1}$%
, and so the relation\newline
\begin{equation}
\left. a\varphi ^{\prime }(a_{1}-0)=\varphi (a_{2})-\varphi (a_{1})\right.
\end{equation}%
holds, where $a:=a_{2}-a_{1}.$ From (5) and (6), we have the next lemma \cite%
{ozz}.

\begin{lemma}
The following asymptotic formula holds for $\left\vert \lambda \right\vert
\rightarrow \infty $; 
\begin{equation*}
\varphi (t,\lambda )=\left\{ 
\begin{array}{c}
\cos \sqrt{\lambda }t+\dfrac{h}{\sqrt{\lambda }}\sin \sqrt{\lambda }%
t+O\left( \dfrac{1}{\sqrt{\lambda }}\exp \left\vert \tau \right\vert
t\right) ,\text{ \ \ \ }t\in \mathbb{[}0,a_{1}] \\ 
a^{2}\lambda \sin \sqrt{\lambda }a_{1}\sin \sqrt{\lambda }(t-a_{2})+O\left( 
\sqrt{\lambda }\exp \left\vert \tau \right\vert (t-a_{2}+a_{1}\right) ),%
\text{ \ \ \ }t\in \lbrack a_{2},l]%
\end{array}%
\right.
\end{equation*}%
\newline
where $\tau :=\func{Im}\sqrt{\lambda }.$
\end{lemma}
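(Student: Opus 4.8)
The plan is to prove the asymptotic formula in two pieces, corresponding to the two components of the time scale, starting from the integral equation (5) on the continuous interval $[0,a_1]$ and then propagating across the gap using the transmission-type relation (6).

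First I would establish the formula on $[0,a_1]$. The integral equation (5) is a Volterra equation, so I would solve it by the standard method of successive approximations, writing $\varphi(t,\lambda)=\sum_{n\geq 0}\varphi_n(t,\lambda)$ with $\varphi_0=\cos\sqrt{\lambda}t+\frac{h}{\sqrt{\lambda}}\sin\sqrt{\lambda}t$ and each subsequent term defined by the integral operator. Using the elementary bounds $|\cos\sqrt{\lambda}t|\leq\exp|\tau|t$ and $|\sin\sqrt{\lambda}t|\leq\exp|\tau|t$ (where $\tau=\operatorname{Im}\sqrt{\lambda}$), together with the continuity and hence boundedness of $q$ on the compact set $\mathbb{T}$, I would show that the series converges and that the tail beyond $\varphi_0$ contributes a term of order $O\!\left(\frac{1}{\sqrt{\lambda}}\exp|\tau|t\right)$. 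This yields the first line of the claimed formula. I would also record the companion estimate for $\varphi^\Delta(t,\lambda)=\varphi'(t,\lambda)$ on the interior of $[0,a_1]$, namely $\varphi'(t,\lambda)=-\sqrt{\lambda}\sin\sqrt{\lambda}t+h\cos\sqrt{\lambda}t+O(\exp|\tau|t)$, since this is what feeds into the next step.

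Next I would evaluate these expressions at $t=a_1$ and use the transmission relation (6), $a\,\varphi'(a_1-0)=\varphi(a_2)-\varphi(a_1)$, to obtain the value $\varphi(a_2,\lambda)$, and an analogous relation for the delta-derivative to obtain $\varphi^\Delta(a_2,\lambda)$. The dominant contribution comes from $\varphi'(a_1-0)\sim-\sqrt{\lambda}\sin\sqrt{\lambda}a_1$, so that $\varphi(a_2,\lambda)$ inherits a leading term proportional to $a\sqrt{\lambda}\sin\sqrt{\lambda}a_1$ up to lower-order corrections; this explains the appearance of the factor $a^2\lambda\sin\sqrt{\lambda}a_1$ once the second interval is handled. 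On $[a_2,l]$ the solution again satisfies a Volterra integral equation of the same type but now with the initial data $\varphi(a_2,\lambda)$ and $\varphi^\Delta(a_2,\lambda)$ at the left endpoint $a_2$; running the successive-approximation argument a second time on $[a_2,l]$, the free term becomes $\varphi(a_2,\lambda)\cos\sqrt{\lambda}(t-a_2)+\frac{1}{\sqrt{\lambda}}\varphi^\Delta(a_2,\lambda)\sin\sqrt{\lambda}(t-a_2)$, and substituting the leading behaviour just computed produces the claimed second line $a^2\lambda\sin\sqrt{\lambda}a_1\sin\sqrt{\lambda}(t-a_2)$ with remainder $O\!\left(\sqrt{\lambda}\exp|\tau|(t-a_2+a_1)\right)$.

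The main obstacle I anticipate is bookkeeping the error terms correctly across the junction: one must verify that the transmission relation (6) and its derivative analogue convert the $O(\exp|\tau|a_1)$ errors from the first interval into errors of the stated order on the second interval, and in particular that the exponential weight accumulates exactly as $\exp|\tau|(t-a_2+a_1)$ rather than some larger factor. Care is also needed to justify that $\varphi^\Delta$ is continuous at $a_1$ (as asserted before (6)) so that the one-sided derivative $\varphi'(a_1-0)$ may legitimately be used, and to confirm that the leading coefficient $a^2\lambda$ arises from the combination of the factor $a$ in (6) together with the $\sqrt{\lambda}$ from differentiating the free term. Once the two Volterra estimates are in place, these remaining points are routine but must be tracked carefully to land on the precise orders claimed.
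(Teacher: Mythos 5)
Your overall strategy (Volterra successive approximations on $[0,a_1]$, transfer of data across the gap via the junction relations, then a second Volterra argument on $[a_2,l]$) is exactly the route the paper intends — the paper obtains the lemma ``from (5) and (6)'', citing \cite{ozz} for details — and your first step on $[0,a_1]$ is fine. The genuine gap is in the transfer step. Relation (6) only gives you $\varphi (a_{2},\lambda )=\varphi (a_{1},\lambda )+a\varphi ^{\prime }(a_{1}-0,\lambda )\sim -a\sqrt{\lambda }\sin \sqrt{\lambda }a_{1}$. There is no free-standing ``analogous relation for the delta-derivative'': the value $\varphi ^{\Delta }(a_{2},\lambda )=\varphi ^{\prime }(a_{2}+0,\lambda )$ can only be connected to the data on $[0,a_{1}]$ through the dynamic equation (1) itself evaluated at the right-scattered point $t=a_{1}$, where $\sigma (a_{1})=a_{2}$ and $\varphi ^{\Delta \Delta }(a_{1})=\bigl(\varphi ^{\Delta }(a_{2})-\varphi ^{\Delta }(a_{1})\bigr)/a$, so that (1) reads
\begin{equation*}
\varphi ^{\Delta }(a_{2},\lambda )=\varphi ^{\Delta }(a_{1},\lambda )+a\bigl(q(a_{1})-\lambda \bigr)\varphi (a_{2},\lambda ).
\end{equation*}
It is the term $-a\lambda \varphi (a_{2},\lambda )\sim a^{2}\lambda ^{3/2}\sin \sqrt{\lambda }a_{1}$ in this identity — note the explicit factor $\lambda $, which no purely kinematic continuity or jump relation can produce — that generates the leading coefficient: inserting it into the free term $\varphi (a_{2})\cos \sqrt{\lambda }(t-a_{2})+\frac{1}{\sqrt{\lambda }}\varphi ^{\Delta }(a_{2})\sin \sqrt{\lambda }(t-a_{2})$ yields $a^{2}\lambda \sin \sqrt{\lambda }a_{1}\sin \sqrt{\lambda }(t-a_{2})$ plus errors of order $\sqrt{\lambda }\exp \left\vert \tau \right\vert (t-a_{2}+a_{1})$.

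Consequently your accounting of the coefficient is wrong: you claim $a^{2}\lambda $ arises from ``the factor $a$ in (6) together with the $\sqrt{\lambda }$ from differentiating the free term'', but that bookkeeping produces only terms of order $\sqrt{\lambda }$. Indeed, the contribution you identify, $\varphi (a_{2})\cos \sqrt{\lambda }(t-a_{2})\sim -a\sqrt{\lambda }\sin \sqrt{\lambda }a_{1}\cos \sqrt{\lambda }(t-a_{2})$, is precisely part of the error term $O(\sqrt{\lambda }\exp \left\vert \tau \right\vert (t-a_{2}+a_{1}))$ of the lemma, not its leading term. If $\varphi ^{\Delta }(a_{2})$ were comparable with $\varphi ^{\prime }(a_{1}-0)$ — the natural reading of an ``analogous'' transmission relation — the entire expression on $[a_{2},l]$ would come out $O(\sqrt{\lambda })$, and the stated leading term of order $\lambda $ (and with it the asymptotics (8) for $\Delta (\lambda )$ and the eigenvalue formula (9)) would never appear. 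The missing idea is thus the second, $\lambda $-dependent transmission condition supplied by the equation at the scattered point; this is exactly what the jump condition $y^{\prime }(a_{1}+0)-y^{\prime }(a_{1}-0)=\frac{1}{a}(a^{2}\lambda +b+1)y(a_{1}+0)$ in Remark 1 encodes. Once that relation is in place, the rest of your plan goes through.
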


The zeros of the function 
\begin{equation}
\Delta (\lambda )=\varphi ^{\Delta }(l,\lambda )+H\varphi (l,\lambda
)=\varphi ^{\prime }(l,\lambda )+H\varphi (l,\lambda )
\end{equation}%
coincide with the eigenvalues of the problem (1)-(3). From Lemma 1 we have
the following asimptotic relation.

\begin{equation}
\Delta (\lambda )=a^{2}\lambda ^{3/2}\sin \sqrt{\lambda }a_{1}\cos \sqrt{%
\lambda }(l-a_{2})+O\left( \lambda \exp \left\vert \tau \right\vert
(l-a_{2}+a_{1}\right) )
\end{equation}%
If we assume $l-a_{2}=a_{1}$, then we can prove by using well-known Rouche's
theorem that $\left\{ \lambda _{n}\right\} _{n\geq 1}$ satisfies the
following asymptotic formula for $n\rightarrow \infty $:%
\begin{equation}
\sqrt{\lambda _{n}}=\frac{(n-1)\pi }{2a_{1}}+O\left( \dfrac{1}{n}\right)
\end{equation}

\begin{lemma}
The system of functions $\left\{ \cos 2\sqrt{\lambda _{n}}t\right\}
_{n=1}^{\infty }$ is complete in $L_{2}(0,a_{1}).$
\end{lemma}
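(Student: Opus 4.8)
The plan is to prove completeness by comparing the given system with the standard Fourier cosine basis and invoking a stability (perturbation) theorem for bases. Set $\nu_{n}:=\frac{(n-1)\pi }{a_{1}}$ and $\mu_{n}:=2\sqrt{\lambda_{n}}$. The system $\{\cos \nu_{n}t\}_{n=1}^{\infty }=\{1,\cos \frac{\pi }{a_{1}}t,\cos \frac{2\pi }{a_{1}}t,\dots \}$ is a complete orthogonal system in $L_{2}(0,a_{1})$, and after normalization it becomes an orthonormal basis $\{e_{n}\}$ of $L_{2}(0,a_{1})$. First I would record, from the eigenvalue asymptotics (10), that $\mu_{n}=\nu_{n}+O(1/n)$, so the functions $\cos \mu_{n}t$ are asymptotically close to the basis elements $\cos \nu_{n}t$.

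Second I would make this closeness quantitative. Using the identity $\cos \mu_{n}t-\cos \nu_{n}t=-2\sin \frac{(\mu_{n}+\nu_{n})t}{2}\sin \frac{(\mu_{n}-\nu_{n})t}{2}$ together with $\left\vert \sin x\right\vert \leq \left\vert x\right\vert $, one obtains the pointwise bound $\left\vert \cos \mu_{n}t-\cos \nu_{n}t\right\vert \leq \left\vert \mu_{n}-\nu_{n}\right\vert t\leq a_{1}\left\vert \mu_{n}-\nu_{n}\right\vert $ on $(0,a_{1})$, and hence $\left\Vert \cos \mu_{n}t-\cos \nu_{n}t\right\Vert_{L_{2}(0,a_{1})}^{2}\leq a_{1}^{3}\left\vert \mu_{n}-\nu_{n}\right\vert^{2}=O(1/n^{2})$. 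Summing over $n$ and using (10) yields the quadratic closeness $\sum_{n=1}^{\infty }\left\Vert f_{n}-e_{n}\right\Vert^{2}<\infty $, where $f_{n}$ is the normalization of $\cos \mu_{n}t$; note that completeness of $\{f_{n}\}$ is equivalent to completeness of $\{\cos \mu_{n}t\}$ since the two differ only by nonzero scalar factors. This is exactly the Bari/Paley--Wiener hypothesis.

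Finally I would invoke the stability theorem: a sequence quadratically close to an orthonormal basis is itself a Riesz basis, hence complete, once a smallness or independence condition is verified. Concretely, choose $N$ so that $\sum_{n\geq N}\left\Vert f_{n}-e_{n}\right\Vert^{2}<1$; the mixed system $\{e_{1},\dots ,e_{N-1}\}\cup \{f_{n}\}_{n\geq N}$ is then a Riesz basis by the quantitative Paley--Wiener criterion, and one transfers completeness back to the full system $\{f_{n}\}$.

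The main obstacle is precisely this last transfer. The asymptotics (10) control only large $n$, while for small $n$ the functions $\cos \mu_{n}t$ may deviate substantially from $\cos \nu_{n}t$, so quadratic closeness by itself does not immediately deliver completeness. The clean way around this is the Fredholm viewpoint: the operator $T$ defined by $Te_{n}=f_{n}$ equals the identity plus a Hilbert--Schmidt (hence compact) operator, so $T$ is Fredholm of index zero; consequently $\{f_{n}\}$ is complete if and only if it is minimal, equivalently $\omega $-linearly independent. Thus the residual work is to rule out a nontrivial $\ell^{2}$-relation among the $\cos \mu_{n}t$ (or, alternatively, to run the equivalent Paley--Wiener entire-function argument, where the same difficulty reappears as the borderline critical density of the zero set $\{\mu_{n}\}$). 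Once this is settled, completeness of $\{\cos 2\sqrt{\lambda_{n}}t\}_{n=1}^{\infty }$ in $L_{2}(0,a_{1})$ follows.
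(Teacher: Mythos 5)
Your overall strategy---compare $\left\{ \cos 2\sqrt{\lambda _{n}}t\right\}$ with a known complete cosine system via the eigenvalue asymptotics and then invoke a stability theorem---is broadly the same as the paper's, but your argument stops exactly at the step that carries all of the difficulty, and you say so yourself. Quadratic closeness to an orthonormal basis does not by itself imply completeness (take $f_{1}$ to be a second copy of $e_{2}$ and $f_{n}=e_{n}$ for $n\geq 2$: the hypothesis $\sum \left\Vert f_{n}-e_{n}\right\Vert ^{2}<\infty $ holds, yet the system is neither minimal nor complete), so after your Fredholm reduction one must still prove that $\left\{ \cos 2\sqrt{\lambda _{n}}t\right\} $ admits no nontrivial $\ell ^{2}$-relation, i.e.\ is $\omega $-linearly independent (equivalently minimal, equivalently complete). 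You explicitly defer this ``residual work'' (``once this is settled, completeness follows''), but it is the mathematical content of the lemma, not a technicality: since $2\sqrt{\lambda _{n}}=(n-1)\pi /a_{1}+O(1/n)$, the frequencies have exactly the critical density for $L_{2}(0,a_{1})$, so the corresponding entire-function (Paley--Wiener) argument sits at the borderline case and cannot be finished by density counting alone. As written, the proposal is a correct reduction together with an acknowledged gap, not a proof.

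The paper closes this gap by choosing a different comparison system. Instead of the pure Fourier frequencies $\nu _{n}=(n-1)\pi /a_{1}$, it compares $\left\{ 2\sqrt{\lambda _{n}}\right\} $ with $\left\{ \sqrt{\mu _{n}}\right\} $, where $\mu _{n}$ are the eigenvalues of the classical Sturm-Liouville problem $-y^{\prime \prime }+q(t)y=\lambda y$ on $(0,a_{1})$ with $y^{\prime }(0)-hy(0)=y^{\prime }(a_{1})+Hy(a_{1})=0$. For such spectra it is already known (\cite{volk}, \cite{har}) that $\left\{ \cos \sqrt{\mu _{n}}t\right\} $ is complete (indeed forms a Riesz basis) in $L_{2}(0,a_{1})$, and since $\sqrt{\mu _{n}}=(n-1)\pi /a_{1}+O(1/n)$ the same asymptotics you use give $\sum_{n}\left\vert 2\sqrt{\lambda _{n}}-\sqrt{\mu _{n}}\right\vert ^{2}<\infty $. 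The conclusion is then obtained from Lemma 3.1 of \cite{har}, a stability result tailored to cosine systems generated by Sturm-Liouville spectra, which performs precisely the completeness transfer (the $\omega $-independence issue) that your argument leaves open. To make your version self-contained you would have to reprove the substance of that lemma; alternatively, you could simply adopt the paper's comparison with $\left\{ \sqrt{\mu _{n}}\right\} $ and cite it.
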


\begin{proof}
$\ $Let $\left\{ \mu _{n}\right\} _{n\geq 1}$ be the eigenvalues of the
classical Sturm-Liouville problem 
\begin{eqnarray*}
&&\left. -y^{\prime \prime }+q(t)y=\lambda y,\text{ }t\in (0,a_{1})\right. \\
&&\left. y^{\prime }(0)-hy(0)=y^{\prime }(a_{1})+Hy(a_{1})=0.\right.
\end{eqnarray*}%
Then the asymptotic equality $\sqrt{\mu _{n}}=\frac{(n-1)\pi }{a_{1}}%
+O\left( \dfrac{1}{n}\right) $ holds for sufficiently large $n.$ Taking care
of (11) we get%
\begin{equation}
\overset{\infty }{\underset{n=1}{\dsum }}\left\vert 2\sqrt{\lambda _{n}}-%
\sqrt{\mu _{n}}\right\vert ^{2}<\infty .
\end{equation}

It is known \cite{volk}, \cite{har} that $\left\{ \cos \sqrt{\mu _{n}}%
t\right\} _{n=1}^{\infty }$ is complete in $L_{2}(0,a_{1}).$ By using Lemma
3.1. in \cite{har}, the proof is completed.
\end{proof}

We state the main result of this article.

Let $\Lambda :=\left\{ \lambda _{n}\right\} _{n\geq 1}$ and $\widetilde{%
\Lambda }:=\left\{ \widetilde{\lambda }_{n}\right\} _{n\geq 1}$ be the
eigenvalues sets of $L$ and $\widetilde{L},$ $\varphi (t,\lambda _{n})$ and $%
\tilde{\varphi}(t,\lambda _{n})$ are eigenfunctions related to this
eigenvalues, respectively.

\begin{theorem}
If $\Lambda =\widetilde{\Lambda },$ $a_{1}+a_{2}=l$ and 
\begin{equation*}
\frac{\varphi ^{\Delta }(a_{1},\lambda _{n})}{\varphi (a_{1},\lambda _{n})}=%
\frac{\tilde{\varphi}^{\Delta }(a_{1},\lambda _{n})}{\tilde{\varphi}%
(a_{1},\lambda _{n})}
\end{equation*}%
then $q(t)=\widetilde{q}(t)$\ on $\mathbb{T}$.
\end{theorem}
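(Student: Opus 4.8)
The plan is to adapt the classical Mochizuki–Trooshin technique to this time-scale setting. The key idea is to introduce the difference function $Q(t):=q(t)-\widetilde{q}(t)$ and to study the bilinear expression formed from the two eigenfunctions $\varphi(t,\lambda_n)$ and $\widetilde{\varphi}(t,\lambda_n)$. First I would recall that both $\varphi$ and $\widetilde{\varphi}$ satisfy the respective equations $\ell y=\lambda_n y^{\sigma}$ and $\widetilde{\ell}y=\lambda_n y^{\sigma}$ with the same $\lambda_n$ (since $\Lambda=\widetilde{\Lambda}$) and the same boundary condition at $0$ (both satisfy the initial conditions in (4) with the same $h$). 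Multiplying one equation by the other eigenfunction, subtracting, and using the time-scale product/integration-by-parts rule (Green's-type formula for the $\Delta$-derivative), I expect to obtain an identity of the form
\begin{equation*}
\int_{0}^{a_{1}} Q(t)\,\varphi(t,\lambda_n)\,\widetilde{\varphi}^{\sigma}(t,\lambda_n)\,\Delta t
= \Big[\varphi^{\Delta}\widetilde{\varphi}-\varphi\,\widetilde{\varphi}^{\Delta}\Big]_{0}^{a_{1}} + (\text{contribution from }[a_2,l]).
\end{equation*}

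Next I would evaluate the boundary terms. At $t=0$ the Wronskian-type bracket vanishes because both solutions satisfy the same initial conditions (4). At $t=a_1$ the hypothesis $\dfrac{\varphi^{\Delta}(a_1,\lambda_n)}{\varphi(a_1,\lambda_n)}=\dfrac{\widetilde{\varphi}^{\Delta}(a_1,\lambda_n)}{\widetilde{\varphi}(a_1,\lambda_n)}$ forces the bracket $\varphi^{\Delta}\widetilde{\varphi}-\varphi\,\widetilde{\varphi}^{\Delta}$ to vanish at $a_1$ as well, since it is exactly the numerator of the difference of the two logarithmic-derivative quotients. Using the symmetry condition $a_1+a_2=l$ together with Lemma 1 (the asymptotics of $\varphi$ on $[a_2,l]$), I would argue that the contribution from the second interval $[a_2,l]$ either vanishes or can be folded into the first-interval integral via the structural link (6) between the two pieces of the time scale; the precise bookkeeping here is where the time-scale analogue diverges from the classical proof. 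The upshot should be that
\begin{equation*}
\int_{0}^{a_{1}} Q(t)\,\varphi(t,\lambda_n)\,\widetilde{\varphi}^{\sigma}(t,\lambda_n)\,\Delta t = 0 \qquad \text{for all } n\geq 1.
\end{equation*}

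The final step is a completeness/density argument. Using the leading asymptotics of $\varphi$ and $\widetilde{\varphi}$ from Lemma 1 on $[0,a_1]$, the product $\varphi(t,\lambda_n)\widetilde{\varphi}^{\sigma}(t,\lambda_n)$ behaves to leading order like $\cos^2\sqrt{\lambda_n}t$, which up to the identity $2\cos^2\theta=1+\cos 2\theta$ is governed by the system $\{\cos 2\sqrt{\lambda_n}t\}$. Lemma 2 guarantees this system is complete in $L_2(0,a_1)$, so after separating the leading term from the $O(1/\sqrt{\lambda_n})$ remainder and controlling the error, the vanishing of all the integrals forces $Q(t)=0$ a.e., hence $q(t)=\widetilde{q}(t)$ on $[0,a_1]$. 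I would then propagate the equality to $[a_2,l]$ either by the symmetry $a_1+a_2=l$ (reflecting the problem about its midpoint) or by repeating the argument with solutions generated from the right endpoint.

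I expect the main obstacle to be the middle step: correctly deriving the Green-type identity across the gap in $\mathbb{T}=[0,a_1]\cup[a_2,l]$ and handling the transmission-type relation (6) so that the boundary contributions at the ``jump'' $a_1\to a_2$ telescope cleanly. In the classical continuous case the Wronskian integration by parts is routine, but on this disconnected time scale one must track the $\Delta$-derivative jump and verify that the bracket genuinely collapses to the stated logarithmic-derivative condition at $a_1$ without leaving uncontrolled terms from the interval $[a_2,l]$; reconciling the differing orders of growth in Lemma 1 on the two pieces is the delicate part.
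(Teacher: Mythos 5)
Your skeleton (Wronskian-type identity, boundary terms vanishing at $0$ by the initial conditions (4) and at $a_{1}$ by the ratio hypothesis, completeness of $\left\{ \cos 2\sqrt{\lambda _{n}}t\right\} $, then a reflection for the second interval) is the same as the paper's, but the step you compress into ``separating the leading term from the $O(1/\sqrt{\lambda _{n}})$ remainder and controlling the error'' is a genuine gap, not a technicality. Completeness cannot absorb an error term: by the Riemann--Lebesgue lemma, for \emph{every} $F\in L_{2}(0,a_{1})$ one has $\int_{0}^{a_{1}}F(t)\cos 2\sqrt{\lambda _{n}}t\,dt\rightarrow 0$, so knowing only that the integrals of $Q$ against the leading term decay like $O(1/\sqrt{\lambda _{n}})$ distinguishes nothing; completeness needs the coefficients to vanish \emph{exactly}. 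The paper avoids this by using the exact transformation-operator representation $\varphi \widetilde{\varphi }=\frac{1}{2}\left[ 1+\cos 2\sqrt{\lambda }t+\int_{0}^{t}V(t,\tau )\cos 2\sqrt{\lambda }\tau \,d\tau \right] $ (no error term), interchanging the order of integration so that Lemma 2 applies to the function $Q(t)+\int_{t}^{a_{1}}Q(\tau )V(t,\tau )\,d\tau $, and then observing that the resulting homogeneous Volterra equation (17) has only the trivial solution, whence $Q\equiv 0$ on $[0,a_{1}]$. You also drop the analytic mechanism that converts data at the eigenvalues into identities valid for all $\lambda $: the paper forms $H(\lambda )=\left[ \varphi \widetilde{\varphi }^{\Delta }-\varphi ^{\Delta }\widetilde{\varphi }\right] _{0}^{a_{1}}$, notes $H(\lambda _{n})=0$, shows $\chi =H/\Delta $ is entire and $O(|\lambda |^{-1/2})$ using Lemma 1 and (8), and concludes $H\equiv 0$ by Liouville's theorem; some version of this is needed at least in the second half of the proof.

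The second interval, which you leave as ``repeat the argument or reflect about the midpoint,'' is precisely where the time-scale structure bites, and your outline (including the spurious ``contribution from $[a_{2},l]$'' in an integral taken only over $[0,a_{1}]$) does not resolve it. In the paper one reflects ($q_{1}(t)=q(l-t)$, working with $\nabla $-derivatives) and $\nabla $-integrates the Wronskian identity over all of $[0,a_{2}]$, i.e.\ across the gap; the portion over $[a_{1},a_{2}]$ is a single jump term $\left[ q_{1}(a_{2})-\widetilde{q}_{1}(a_{2})\right] \varphi _{1}^{\rho }(a_{2},\lambda )\widetilde{\varphi }_{1}^{\rho }(a_{2},\lambda )(a_{2}-a_{1})$, and it vanishes \emph{only because} $q(a_{1})=\widetilde{q}(a_{1})$ was already established in the first half --- so the order of the two halves is essential, not cosmetic. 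Moreover, the second half is not finished by another completeness argument: after a second Liouville step ($\omega =K/\Delta $ entire and $O(|\lambda |^{-1/2})$, hence $K\equiv 0$), the paper deduces the all-$\lambda $ identity $\psi ^{\prime }(0,\lambda )\widetilde{\psi }(0,\lambda )=\psi (0,\lambda )\widetilde{\psi }^{\prime }(0,\lambda )$ and invokes the classical uniqueness theorem (Theorem 1.4.7 of Freiling--Yurko) to conclude $q=\widetilde{q}$ on $[a_{2},l]$. As written, your proposal identifies the right obstacles but does not supply the three ingredients that overcome them: the exact kernel representation, the entire-function/Liouville arguments, and the jump-term computation across the gap.
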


\begin{remark}
Let $x=\left\{ 
\begin{array}{cc}
t, & t\in \lbrack a_{1},l] \\ 
t-a_{2}+a_{1}, & t\in \lbrack a_{2,}l]%
\end{array}%
.\right. $From (5) and (6), the problem (1)-(3) can be replaced by the
following boundary value problem 
\begin{eqnarray*}
&&\left. -y^{\prime \prime }+q_{1}(x)y=\lambda y,\text{ }x\in
(0,2a_{1})\right.  \\
&&\left. y^{\prime }(0)-hy(0)=y^{\prime }(2a_{1})+Hy(2a_{1})=0,\right.  \\
&&\left. y(a_{1}+0)-y(a_{1}-0)=ay^{\prime }(a_{1}-0),\right.  \\
&&\left. y^{\prime }(a_{1}+0)-y^{\prime }(a_{1}-0)=\frac{1}{a}(a^{2}\lambda
+b+1)y(a_{1}+0),\right. 
\end{eqnarray*}%
where $a:=a_{2}-a_{1}>0,$ $b:=-a^{2}q(a_{1})-1.$ Thus the eigenvalue problem
considered in Theorem 1 can be transformed into the Sturm-Liouville problem
with an interior discontinuity. The interior inverse problems for
Sturm-Liouville operators with various discontinuity conditions are
discussed in [37-45].
\end{remark}

\section{\textbf{Proof}}

Now we are ready to prove our main result.

\begin{proof}[Proof of Theorem 1.]
Consider the following equalities

\begin{equation}
-\varphi ^{\Delta \Delta }(t,\lambda )+q(t)\varphi ^{\sigma }(t,\lambda
)=\lambda \varphi ^{\sigma }(t,\lambda )
\end{equation}%
\begin{equation}
-\widetilde{\varphi }^{\Delta \Delta }(t,\lambda )+\widetilde{q}(t)%
\widetilde{\varphi }^{\sigma }(t,\lambda )=\lambda \widetilde{\varphi }%
^{\sigma }(t,\lambda ).
\end{equation}%
From (11) and (12) one can obtain%
\begin{equation}
\left[ \varphi (t,\lambda )\widetilde{\varphi }^{\Delta }(t,\lambda
)-\varphi ^{\Delta }(t,\lambda )\widetilde{\varphi }(t,\lambda )\right]
^{\Delta }=\left[ q(t)-\widetilde{q}(t)\right] \varphi ^{\sigma }(t,\lambda )%
\widetilde{\varphi }^{\sigma }(t,\lambda ).
\end{equation}%
By $\Delta $-integrating both sides of (13) on $\left[ 0,a_{1}\right] ,$ we
get%
\begin{equation*}
\left[ \varphi (t,\lambda )\widetilde{\varphi }^{\Delta }(t,\lambda
)-\varphi ^{\Delta }(t,\lambda )\widetilde{\varphi }(t,\lambda )\right]
_{0}^{a_{1}}=\int\limits_{0}^{a_{1}-0}\left[ q(t)-\widetilde{q}(t)\right]
\varphi ^{\sigma }(t,\lambda )\widetilde{\varphi }^{\sigma }(t,\lambda
)\Delta t
\end{equation*}%
Put%
\begin{equation*}
H(\lambda ):=\left[ \varphi (t,\lambda )\widetilde{\varphi }^{\Delta
}(t,\lambda )-\varphi ^{\Delta }(t,\lambda )\widetilde{\varphi }(t,\lambda )%
\right] _{0}^{a_{1}}.
\end{equation*}
From the assumption of theorem and initial conditions (4) we have $H(\lambda
_{n})=0$ for all $\lambda _{n}\in \Lambda $ and so $\chi (\lambda ):=\dfrac{%
H(\lambda )}{\Delta (\lambda )}$ is entire on $\lambda .$ Additionally, it
follows from (8) and Lemma 1 that 
\begin{equation}
\left\vert \chi (\lambda )\right\vert \leq C\left\vert \lambda \right\vert
^{-1/2}.
\end{equation}%
Thus $\chi (\lambda )=0$ for all $\lambda $. Hence, $H(\lambda )\equiv 0$.

From (13), the equality$\ \int\limits_{0}^{a_{1}}\left[ q(t)-\widetilde{q}(t)%
\right] \varphi (t,\lambda )\widetilde{\varphi }(t,\lambda )dt=0$ is valid
on the whole $\lambda $-plane. On the other hand, the following
representation holds on $[0,a_{1}]$ 
\begin{equation*}
\varphi (t,\lambda )\widetilde{\varphi }(t,\lambda )=\frac{1}{2}\left[
1+\cos 2\sqrt{\lambda }t+\int\limits_{0}^{t}V(t,\tau )\cos 2\sqrt{\lambda }%
\tau d\tau \right] ,
\end{equation*}%
where $V(t,x)$ is a continuous function which does not depend on $\lambda .$
This is used to get 
\begin{eqnarray}
&&\left. \frac{1}{2}\int\limits_{0}^{a_{1}}\left[ q(t)-\widetilde{q}(t)%
\right] \left( 1+\cos 2\sqrt{\lambda }t\right) dt+\right. \\
&&\left. \text{ \ \ \ \ \ \ \ \ }+\int\limits_{0}^{a_{1}}\left[ q(t)-%
\widetilde{q}(t)\right] \left[ \int\limits_{0}^{t}V(t,\tau )\cos 2\sqrt{%
\lambda }\tau d\tau \right] dt=0.\right.  \notag
\end{eqnarray}%
Taking into account (15) we conclude that%
\begin{equation}
\int\limits_{0}^{a_{1}}\cos 2\sqrt{\lambda }t\left[ q(t)-\widetilde{q}%
(t)+\int\limits_{t}^{a_{1}}\left[ q(\tau )-\widetilde{q}(\tau )\right]
V(t,\tau )d\tau \right] dt=0
\end{equation}%
Therefore, it follows from the completeness of the functions $\cos 2\sqrt{%
\lambda }t$ in Lemma 2 that 
\begin{equation}
q(t)-\widetilde{q}(t)+\int\limits_{t}^{a_{1}}\left[ q(\tau )-\widetilde{q}%
(\tau )\right] V(t,\tau )d\tau =0,\text{ }t\in \lbrack 0,a_{1})
\end{equation}%
Since the equation (17) is a homogenous Volterra integral equation,\ it has
only trivial solution. Thus $q(t)=$ $\widetilde{q}(t)$ on $\left[ 0,a_{1}%
\right] .$ To prove that $q(t)=$ $\widetilde{q}(t)$ on $\left[ a_{2},l\right]
$, we will consider the supplementary problem $L_{1}$ :%
\begin{eqnarray*}
&&\left. -y^{\nabla \nabla }+q_{1}(t)y^{\rho }=\lambda y^{\rho },\text{ }%
t\in \mathbb{T},\right. \\
&&\left. y^{\nabla }(0)-Hy(0)=y^{\nabla }(l)+hy(l)=0,\right.
\end{eqnarray*}%
where $q_{1}(t)=q(l-t).$\ By using chain rule in \cite{bart}, we have $%
\varphi _{1}(t,\lambda )=\varphi (l-t,\lambda )$ satisfies the equation $%
-\varphi _{1}^{\nabla \nabla }+q_{1}(t)\varphi _{1}^{\rho }=\lambda \varphi
_{1}^{\rho }$ and the initial conditions $\varphi _{1}(l,\lambda )=1,$ $%
\varphi _{1}^{\nabla }(l,\lambda )=-h$. Furthermore, the assumption $\dfrac{%
\varphi _{1}^{\nabla }(a_{2},\lambda _{n})}{\varphi _{1}(a_{2},\lambda _{n})}%
=\dfrac{\tilde{\varphi}_{1}^{\nabla }(a_{2},\lambda _{n})}{\tilde{\varphi}%
_{1}(a_{2},\lambda _{n})}$ holds.

If we repeat the above arguments then we replace equation (13) by%
\begin{equation}
\left[ \varphi _{1}(t,\lambda )\widetilde{\varphi }_{1}^{\nabla }(t,\lambda
)-\varphi _{1}^{\nabla }(t,\lambda )\widetilde{\varphi }_{1}(t,\lambda )%
\right] ^{\nabla }=\left[ q_{1}(t)-\widetilde{q}_{1}(t)\right] \varphi
_{1}^{\rho }(t,\lambda )\widetilde{\varphi }_{1}^{\rho }(t,\lambda ).
\end{equation}%
By integrating (in the sense of $\nabla $-integral) both sides of this
equality on $\left[ 0,a_{2}\right] ,$ we obtain%
\begin{eqnarray*}
\left[ \varphi _{1}(t,\lambda )\widetilde{\varphi }_{1}^{\nabla }(t,\lambda
)-\varphi _{1}^{\nabla }(t,\lambda )\widetilde{\varphi }_{1}(t,\lambda )%
\right] _{0}^{a_{2}} &=&\int\limits_{0}^{a_{2}}\left[ q_{1}(t)-\widetilde{q}%
_{1}(t)\right] \varphi _{1}^{\rho }(t,\lambda )\widetilde{\varphi }%
_{1}^{\rho }(t,\lambda )\nabla t \\
&=&\int\limits_{0}^{a_{1}}\left[ q_{1}(t)-\widetilde{q}_{1}(t)\right]
\varphi _{1}(t,\lambda )\widetilde{\varphi }_{1}(t,\lambda )dt \\
&&+\int\limits_{a_{1}}^{a_{2}}\left[ q_{1}(t)-\widetilde{q}_{1}(t)\right]
\varphi _{1}^{\rho }(t,\lambda )\widetilde{\varphi }_{1}^{\rho }(t,\lambda
)\nabla t
\end{eqnarray*}%
Since $q(a_{1})=\widetilde{q}(a_{1})$, then $q_{1}(a_{2})=\widetilde{q}%
_{1}(a_{2})$ and\newline
$\int\limits_{a_{1}}^{a_{2}}\left[ q_{1}(t)-\widetilde{q}_{1}(t)\right]
\varphi ^{\rho }(t,\lambda )\widetilde{\varphi }^{\rho }(t,\lambda )\nabla t=%
\left[ q_{1}(a_{2})-\widetilde{q}_{1}(a_{2})\right] \varphi ^{\rho
}(a_{2},\lambda )\widetilde{\varphi }^{\rho }(a_{2},\lambda )\left(
a_{2}-a_{1}\right) =0.$ \newline
Therefore%
\begin{equation}
\left. \varphi _{1}^{\nabla }(a_{2},\lambda )\widetilde{\varphi }%
_{1}(a_{2},\lambda )-\varphi _{1}(a_{2},\lambda )\widetilde{\varphi }%
_{1}^{\nabla }(a_{2},\lambda )=\int\limits_{0}^{a_{1}}\left[ q_{1}(t)-%
\widetilde{q}_{1}(t)\right] \varphi _{1}(t,\lambda )\widetilde{\varphi }%
_{1}(t,\lambda )dt.\right.
\end{equation}%
Let

\begin{equation}
\left. K(\lambda ):=\int\limits_{0}^{a_{1}}\left[ q_{1}(t)-\widetilde{q}%
_{1}(t)\right] \varphi _{1}(t,\lambda )\widetilde{\varphi }_{1}(t,\lambda
)dt.\right.
\end{equation}%
It is obvious that $K(\lambda _{n})=0$ for all $\lambda _{n}\in \Lambda $
and so $\omega (\lambda ):=\dfrac{K(\lambda )}{\Delta (\lambda )}$ is entire
on $\lambda .$ On the other hand, from asymptotics $\varphi _{1}^{\nabla
}(a_{2},\lambda )$\ and $\varphi _{1}(a_{2},\lambda )$, $K(\lambda
)=O(\lambda \exp 2\left\vert \tau \right\vert a_{1})$ and $\left\vert \Delta
(\lambda )\right\vert \geq C_{\gamma }\left\vert \lambda \right\vert
^{3/2}\exp 2\left\vert \tau \right\vert a_{1}$ for sufficiently large $%
\left\vert \lambda \right\vert .$ Thus 
\begin{equation}
\left\vert \omega (\lambda )\right\vert \leq C\left\vert \lambda \right\vert
^{-1/2}.
\end{equation}%
From Liouville's Theorem $\omega (\lambda )=0$ for all $\lambda $. Hence, $%
K(\lambda )\equiv 0$.

By integrating\ again both sides of the equality (18) on $\left(
0,a_{1}\right) $, we get%
\begin{equation}
\varphi _{1}^{\prime }(a_{1},\lambda )\widetilde{\varphi }_{1}(a_{1},\lambda
)=\varphi _{1}(a_{1},\lambda )\widetilde{\varphi }_{1}^{\prime
}(a_{1},\lambda )
\end{equation}%
Put $\psi (t,\lambda ):=\varphi _{1}(a_{1}-t,\lambda ).$ It is clear that $%
\psi (t,\lambda )$ is the solution of the following initial value problem 
\begin{eqnarray*}
&&\left. -y^{\prime \prime }+q_{1}(a_{1}-t)y=\lambda y,\text{ }t\in
(0,a_{1})\right. \\
&&\left. y(a_{1})=1,\text{ }y^{\prime }(a_{1})=-H\right.
\end{eqnarray*}%
It follows from (22) that%
\begin{equation}
\psi ^{\prime }(0,\lambda )\widetilde{\psi }(0,\lambda )=\psi (0,\lambda )%
\widetilde{\psi }^{\prime }(0,\lambda ).
\end{equation}%
Taking into account Theorem 1.4.7. in \cite{G} it is concluded that $%
q_{1}(t)=$ $\widetilde{q}_{1}(t)$ on $\left[ 0,a_{1}\right] ,$ that is $%
q(t)= $ $\widetilde{q}(t)$ on $\left[ 0,a_{2}\right] .$ This completes the
proof.
\end{proof}


\begin{thebibliography}{99}
\bibitem{Ambartsumian} Ambarzumyan, V.A.: \"{U}ber eine Frage der
Eigenwerttheorie. Z. Phys. 53, 690--695 (1929)

\bibitem{Agarwal} Agarwal, R.P., Bohner, M., Wong, P.J.Y.: Sturm-Liouville
eigenvalue problems on time scales. Appl. Math. Comput. 99, 153--166 (1999)

\bibitem{Bp} Allahverdiev, B.P., Eryilmaz, A., Tuna, H.: Dissipative
Sturm-Liouville operators with a spectral parameter in the boundary
condition on bounded time scales. Electronic Journal of Differential
Equations, 95, 1--13 (2017)

\bibitem{Amster} Amster, P., De Na%
\'{}%
poli, P., Pinasco, J.P.: Eigenvalue distribution of second-order dynamic
equations on time scales considered as fractals. J. Math. Anal. Appl. 343,
573--584 (2008)

\bibitem{Amster2} Amster, P., De Na%
\'{}%
poli, P., Pinasco, J.P.: Detailed asymptotic of eigenvalues on time scales,
J. Differ. Equ. Appl. 15, 225--231 (2009)

\bibitem{Atkinson} Atkinson, F.: Discrete and Continuous Boundary Problems.
Academic Press, New York (1964)

\bibitem{Bohner} Bohner, M., Peterson, A.: Dynamic Equations on Time Scales.
Birkha%
\"{}%
user, Boston, MA (2001)

\bibitem{Bohner2} Bohner, M., Peterson, A.: Advances in Dynamic Equations on
Time Scales. Birkha%
\"{}%
user, Boston, MA (2003)

\bibitem{Borg} Borg, G., Eine Umkehrung der Sturm--Liouvilleschen
Eigenwertaufgabe. Bestimmung der Differentialgleichung durch die Eigenwerte,
Acta Math. 78, 1--96 (1946)

\bibitem{Bohner3} Bohner, M., Koyunbakan, H.: Inverse problems for
Sturm--Liouville difference equations. Filomat, 30(5), 1297-1304 (2016)

\bibitem{chadan} Chadan, K., Colton, D., P\"{a}iv\"{a}rinta, L., Rundell,
W.: An introduction to inverse scattering and inverse spectral problems.
Society for Industrial and Applied Mathematics. (1997)

\bibitem{Davidson} Davidson, F.A., Rynne, B.P.: Global bifurcation on time
scales. J. Math. Anal. Appl. 267, 345--360 (2002)

\bibitem{Davidson2} Davidson, F.A., Rynne, B.P.: Self-adjoint boundary value
problems on time scales. Electron. J. Differ. Equ. 175, 1--10 (2007)

\bibitem{Davidson3} Davidson, F.A., Rynne, B.P.: Eigenfunction expansions in
L2 spaces for boundary value problems on time-scales. J. Math. Anal. Appl.
335, 1038--1051 (2007)

\bibitem{Erbe} Erbe, L., Hilger, S.: Sturmian theory on measure chains.
Differ. Equ. Dyn. Syst. 1, 223--244 (1993)

\bibitem{Erbe2} Erbe, L., Peterson, A.: Eigenvalue conditions and positive
solutions. J. Differ. Equ. Appl. 6, 165--191 (2000)

\bibitem{G} Freiling, G., Yurko V.A.: Inverse Sturm--Liouville Problems and
their Applications, Nova Science, New York (2001)

\bibitem{ges} Gesztesy F., Simon B., Inverse spectral analysis with partial
information on the potential. II. The case of discrete spectrum. Trans. Am.
Math. Soc. 352(6), 2765-2787 (2000)

\bibitem{Guseinov} Guseinov, G.S.: Eigenfunction expansions for a
Sturm-Liouville problem on time scales. Int. J. Differ. Equ. 2, 93--104
(2007)

\bibitem{Guseinov2} Guseinov, G.S.: An expansion theorem for a
Sturm-Liouville operator on semi-unbounded time scales. Adv. Dyn. Syst.
Appl. 3, 147--160 (2008)

\bibitem{Hald} Hald, O.H.: Discontiuous inverse eigenvalue problems, Comm.
Pure Appl. Math. 37, 539-577 (1984)

\bibitem{Hilger} Hilger, S.: Analysis on measure chains -- a unified
approach to continuous and discrete calculus. Results in Math. 18, 18--56.
(1990)

\bibitem{Simon} Hilscher, R.S:, Zemanek, P.: Weyl-Titchmarsh theory for time
scale symplectic systems on half line. Abstr. Appl. Anal. Art. ID 738520, 41
pp. (2011)

\bibitem{hoch} Hochstadt, H., Lieberman, B.: An inverse Sturm--Liouville
problem with mixed given data. SIAM Journal on Applied Mathematics. 34(4),
676-680 (1978)

\bibitem{horv} Horvath, M.: Inverse spectral problems and closed exponential
systems, Ann. of Math. 162 885-918 (2005)

\bibitem{Huseinov} Huseynov, A.: Limit point and limit circle cases for
dynamic equations on time scales. Hacet. J. Math. Stat. 39, 379--392 (2010)

\bibitem{Huseinov2} Huseynov, A., Bairamov, E.: On expansions in
eigenfunctions for second order dynamic equations on time scales. Nonlinear
Dyn. Syst. Theory 9, 7--88 (2009)

\bibitem{Kong} Kong, Q.: Sturm-Liouville problems on time scales with
separated boundary conditions. Results Math. 52, 111--121 (2008)

\bibitem{Oz} Ozkan, A.S.: Sturm-Liouville operator with parameter-dependent
boundary conditions on time scales. Electron. J. Differential Equations.
212, 1-10 (2017)

\bibitem{ozz} Ozkan, A. S., Adalar, I. Half-inverse Sturm-Liouville problem
on a time scale. Inverse Problems (2019).
https://doi.org/10.1088/1361-6420/ab2a21

\bibitem{ozkan} Ozkan, A. S.: Ambarzumyan-type theorems on a time scale.
Journal of Inverse and Ill-posed Problems. 26(5), 633--637 (2018)

\bibitem{Rynne} Rynne, B.P.: L2 spaces and boundary value problems on
time-scales. J. Math. Anal. Appl. 328, 1217--1236 (2007)

\bibitem{sak} Sakhnovich L.: Half inverse problems on the finite interval,
Inverse Problems, 17, 527--532 (2001)

\bibitem{Sun} Sun, S., Bohner, M., Chen,S.: Weyl-Titchmarsh theory for
Hamiltonian dynamic systems. Abstr. Appl. Anal. Art. ID 514760, 18 pp. (2010)

\bibitem{volk} He, X., \& Volkmer, H. (2001). Riesz bases of solutions of
Sturm-Liouville equations. Journal of Fourier Analysis and Applications,
7(3), 297-307.

\bibitem{har} Harutyunyan, T., Pahlevanyan, A., Srap\i onyan, A. (2013).
Riesz bases generated by the spectra of Sturm-Liouville problems. Electronic
Journal of Differential Equations, 2013(71), 1-8.

\bibitem{hald} Hald, O. H., Discontinuous inverse eigenvalue problems. Comm.
Pure Appl. Math., 37, 539--577 (1984)

\bibitem{ozk} Ozkan, A. S.: Inverse Sturm--Liouville problems with
eigenvalue dependent boundary and discontinuity conditions. Inverse Probl.
Sci. Eng., 20(6), 857--868 (2012)

\bibitem{shi} Shieh, C. T., Yurko, V. A.: Inverse nodal and inverse spectral
problems for discontinuous boundary value problems. J. Math. Anal. Appl.,
347(1), 266--272 (2008)

\bibitem{yurk} Yurko, V. A.: Integral transforms connected with
discontinuous boundary value problems. Integral Transforms Spec. Funct.,
10(2), 141--164 (2000)

\bibitem{yurk2} Yurko, V. A.: On boundary value problems with jump
conditions inside the interval. Diff. Uravn., 36(8), 1139--1140 (2000);
English Transl. in Diff. Equations, 8, 1266--1269 (2000)

\bibitem{amirov} Amirov RKh. On Sturm-Liouville operators with discontinuity
conditions inside an interval. J. Math. Anal. Appl. 2006;317:163--176.

\bibitem{mcnab} A.McNabb,R.Anderssen,E.Lapwood,Asymptotic behaviour of the
eigenvalues of a Sturm--Liouville system with discontinuous
coefficients,J.Math.Anal.Appl.54 (1976)741--751.

\bibitem{yang1} C.F.Yang, An interior inverse problem for discontinuous
boundary-value problems, Integral Equations Operator Theory 65 (2009)
593--604.

\bibitem{yang} Yang, C-F: Uniqueness theorems for differential pencils with
eigenparameter boundary conditions and transmission conditions. J. Differ.
Equ. 255, 2615-2635 (2013)

\bibitem{troos} Mochizuki, K., Trooshin, I., Inverse problem for interior
spectral data of Sturm-Liouville operator, J. Inverse Ill-posed Probl. 9
(2001) 425-433.

\bibitem{yur} Yurko, V.A. Inverse Problems for Sturm-Liouville Differential
Operators on Closed Sets. Tamkang Journal of Mathematics 50.3 (2019):
199-206.

\bibitem{guse} Atici, F. M., Guseinov, G. Sh. On Green's functions and
positive solutions for boundary value problems on time scales. Journal of
Computational and Applied Mathematics 141.1-2 (2002): 75-99.

\bibitem{bart} Bartosiewicz, Z., Piotrowska, E. Lyapunov functions in
stability of nonlinear systems on time scales. Journal of Difference
Equations and Applications 17.03 (2011): 309-325.

\bibitem{ada} Adalar, I. On Mochizuki-Trooshin Theorem for Sturm-Liouville
Operators. Cumhuriyet Science Journal 40.1 (2019): 108-116.

\bibitem{tamk} Ozkan, A.S., Amirov, R. Kh., An interior inverse problem for
the impulsive Dirac operator, Tamkang Journal of Mathematics, 42 (2011)
259-263.

\bibitem{troos2} Mochizuki, K., Trooshin, I., Inverse problem for interior
spectral data of the Dirac operator on a finite interval, Publ. RIMS, Kyoto
Univ. 38 (2002) 387-395.
\end{thebibliography}
\end{document}